\newcommand{\C}{\mathbb{C}}
\newcommand{\Z}{\mathbb{Z}}
\newcommand{\cO}{\mathcal{O}}
\newcommand{\Pt}{\mathbb{P}^2}
\newcommand{\zee}{\mathfrak{z}}
\newcommand{\Gp}{\Gamma_+}
\newcommand{\Gm}{\Gamma_-}
\newcommand{\Om}{\Omega}
\newcommand{\cU}{\mathcal{U}}
\DeclareMathOperator{\End}{End}
\DeclareMathOperator{\Hilb}{Hilb}
\DeclareMathOperator{\ch}{ch}
\newtheorem*{thma}{Theorem A}
\newtheorem{thm}{Theorem}
\newtheorem{lemma}{Lemma}
\newtheorem{cor}{Corollary}
\author{Erik Carlsson, Simons center for geometry and physics}
\title{Hall-Littlewood polynomials and vector bundles on the Hilbert scheme}
\begin{document}

\maketitle

\abstract{
Let $E$ be the bundle defined by applying a polynomial representation
of $GL_n$ to the tautological bundle on the Hilbert
scheme of $n$ points in the complex plane.
By a result of Haiman \cite{H3}, the
Cech cohomology groups $H^i(E)$ vanish for all
$i>0$. It follows that the equivariant Euler characteristic
with respect to the standard two-dimensional torus action has
nonnegative coefficients in the torus variables $z_1,z_2$,
because they count the dimensions of the weight spaces of $H^0(E)$.
We derive a very explicit asymmetric formula for 
this Euler characteristic which has this property, 
by expanding known contour integral formulas 
for the Euler characteristic stemming from the quiver description \cite{Ne,N}
in $z_2$, and calculating the coefficients using
Jing's Hall-Littlewood vertex operator
with parameter $z_1$ \cite{J}. 
}


\section{Introduction}

Let $\Hilb_n \C^2$ denote the Hilbert scheme of $n$ points in the complex plane,
and consider the standard two-dimensional torus action induced from
\begin{equation}
\label{torus}
T = (\C^*)^2 \circlearrowright \C^2,\quad 
(z_1,z_2)\cdot (x,y) = (z_1^{-1}x,z_2^{-1}y)
\end{equation}
by pullback of ideals. We also have an $n$-dimensional tautological bundle $\cU$
on the Hilbert scheme, whose fiber over a subscheme $[Z] \in \Hilb_n \C^2$
is simply the space of sections of $\cO_Z$, and which inherits an action
of $T$. See \cite{Nak2} for details.

Given a representation $\rho$ of $GL_n$, we obtain a new
equivariant bundle $E = \rho(\cU)$, and we may consider
its Cech cohomology groups $H^i(E)$, as well as its equivariant Euler characteristic
\[\chi_n(E) = \sum_i (-1)^i \ch H_{\Hilb_n}^i (E) \in \C[z_1^{\pm 1}, z_2^{\pm 1}],\]
where $\ch$ denotes the character of $H^i(E)$ as a representation of $T$.
If $\Lambda$ is the ring of symmetric polynomials in infinitely many variables,
the polynomial representations are in the image of the map
\[ \Lambda \rightarrow K_T(\Hilb_n \C^2),\quad 
s_\mu \mapsto \mathbb{S}_\mu(\cU),\]
where $s_\mu \in \Lambda$ is the Schur polynomial, $\mathbb{S}_\mu$ is
the corresponding representation of $GL_n$ (the Schur functor), and $\mu$ 
is a partition. Since the Euler characteristic is defined at the level of $K$-theory,
we have a well defined Euler characteristic $\chi_n(f(\cU))$, for any symmetric
function $f \in \Lambda$.

The main result of this paper is the following formula for the Euler
characteristic,
\begin{thma}
The Euler characteristic is given by
\[\chi_n(f(\cU)) =  \sum_{\mu,\nu} z_2^{|\mu|}z_1^{|\mu|+k_{\mu\nu}}
b_{\nu,n}(z_1)^{-1} f_{\nu\mu}(z_1).\]
\label{introthm}
\end{thma}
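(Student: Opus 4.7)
The plan is to start from the contour integral formula for $\chi_n(f(\cU))$ arising from the ADHM quiver description of $\Hilb_n\C^2$ (Nekrasov, Nakajima) cited in the abstract. This formula writes $\chi_n(f(\cU))$ as a multivariate residue in auxiliary Chern-root variables $x_1,\dots,x_n$, whose integrand is a product of an equivariant Euler class of the deformation complex (rational in $z_1,z_2$ and the $x_i$) with $f$ evaluated at the $x_i$; Atiyah-Bott localization recovers the fixed-point sum over partitions $\lambda\vdash n$.

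First, I would expand the $z_2$-dependent factor of the integrand as a formal power series in $z_2$. Because that factor has a factored form of the type $\prod_i (1-z_2 x_i^{-1})^{\pm 1}$ up to $z_1$-shifts, a geometric expansion produces a sum of terms indexed naturally by partitions $\mu$, each carrying the monomial $z_2^{|\mu|}$. Extracting the coefficient of $z_2^{|\mu|}$ gives a multivariate contour integral depending only on $z_1$ and the $x_i$, and accounts for the $z_2^{|\mu|}z_1^{|\mu|}$ prefactor through the homogeneity of the resulting kernel.

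Next, I would recognize these coefficient integrals as vacuum-to-vector matrix coefficients of Jing's Hall-Littlewood vertex operator at parameter $z_1$, acting on the bosonic Fock space identified with the ring of symmetric functions $\Lambda$. In Jing's construction, iterated application of this operator to the vacuum produces the Hall-Littlewood basis $\{P_\nu(z_1)\}$; inserting a resolution of the identity in this basis converts each coefficient integral into the double sum over $(\mu,\nu)$. The factor $b_{\nu,n}(z_1)^{-1}$ emerges from the standard dual Hall-Littlewood norms, the residual power $z_1^{k_{\mu\nu}}$ tracks the degree shifts accumulated at each application of the vertex operator, and $f_{\nu\mu}(z_1)$ is the pairing of $f$ with $P_\nu$ in the $\mu$-th weight subspace.

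The main technical obstacle will be refactoring the Euler-class part of the quiver integrand into an explicit product of vertex-operator kernels. This requires careful bookkeeping of the shifts by powers of $z_1$ at each operator application, together with control of boundary contributions in the contour when expanding in $z_2$. Once this refactoring is achieved, the orthogonality of Hall-Littlewood polynomials (Macdonald, Ch.~III) produces the claimed diagonal decomposition on the nose, and linearity in $f$ then yields Theorem A in the stated form.
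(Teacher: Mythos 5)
Your overall strategy matches the paper's: start from the quiver contour integral, expand in $z_2$, and interpret the coefficients through Hall--Littlewood structure and Jing's operator. But the two steps you defer as ``technical obstacles'' are precisely where the content of the theorem lives, and your sketches of how they would go are not right. First, the $z_2$-dependence of the integrand sits in $\Om(z_1z_2X)$ and in $\Om(-M\Delta)=\Om(-(1-z_1)\Delta)\,\Om(z_2(1-z_1)\Delta)$; the latter is a product over \emph{pairs} $i\neq j$, not a one-index product $\prod_i(1-z_2x_i^{-1})^{\pm1}$, and a geometric expansion does not index its terms by partitions. The correct tool is the Hall--Littlewood Cauchy identity (Macdonald, Ch.~III, (4.4)), $\Om(x(1-z)XY)=\sum_\lambda x^{|\lambda|}b_\lambda(z)P_\lambda(X;z)P_\lambda(Y;z)$ applied with $Y=\overline{X}$, which produces an \emph{auxiliary} summation partition $\lambda$ --- not the $\mu$ of the theorem. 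After rewriting the constant term as the finite-variable inner product $(\,\cdot\,,\cdot\,)_{z_1,n}$ one lands on a triple sum over $\lambda,\mu,\nu$ involving $f_{\nu\mu}(z_1)\psi_{\mu\lambda}(z_1)\psi_{\nu\lambda}(z_1)$, where $\psi_{\mu\lambda}$ are matrix elements of $\Gm(1)$ (multiplication by a complete homogeneous polynomial) in the $P$-basis.

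Second, the monomial $z_1^{k_{\mu\nu}}$ does not arise as ``degree shifts accumulated at each application of the vertex operator.'' It arises from collapsing the $\lambda$-sum via the identity
\begin{equation*}
\sum_{\lambda} z^{-|\lambda|}\, b_\lambda(z)\, \psi_{\mu\lambda}(z)\,\psi_{\nu\lambda}(z) \;=\; z^{k_{\mu\nu}},
\end{equation*}
which is the paper's key lemma and is genuinely nontrivial: a priori the left side is an infinite sum of rational functions of $z$, and the claim that it equals a single monomial with the explicit exponent $k_{\mu\nu}=\sum_i\binom{\mu_i'}{2}+\binom{\nu_i'}{2}-\mu_i'\nu_i'$ is where Jing's operator actually enters. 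The paper proves it by characterizing $k_{\mu\nu}$ through the recursion $k_{[a,\mu]\nu}-k_{\mu\nu}=|\mu|-|\nu|$ for $a\geq\mu_1,\nu_1$, and verifying the matching recursion for the bilinear form $(Q_\mu,Q_\nu)'=(\Gp(A^{-1})Q_\mu,\,z^{-d}\Gp(A^{-1})Q_\nu)_z$ using the commutation relation $\Gp(A)\Gm(B)=\Gm(B)\Gp(A)\Om(AB)$ together with a degree bound on the resulting Laurent polynomial in the vertex-operator variable $x$. Without this lemma (or an equivalent evaluation of the $\lambda$-sum), your outline cannot produce the closed form in Theorem A, and in particular gives no route to the specific formula for $k_{\mu\nu}$. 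You should also note that ``inserting a resolution of the identity'' in the $P$-basis is done with respect to the $n$-variable inner product, which is why the finite-variable norm $b_{\nu,n}(z_1)^{-1}$, rather than $b_\nu(z_1)^{-1}$, appears.
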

\noindent
Here $k_{\mu\nu}$ is an integer, $b_{\nu,n}(z)$ is the norm squred
of the Hall-Littlewood polynomial $P_\nu(X;z)$ in $n$ variables,
and $f_{\nu\mu}(z)$ is the matrix element of the operator of multiplication
by $f$ in the Hall-Littlewood basis.
The significance of this formula
is that the coefficients of its power series 
about the origin are
\emph{nonnegative} integers whenever $f$ is an honest representation,
i.e. a nonnegative integral linear combination
of Schur polynomials. This nonnegativity follows from a result
of Haiman which says that
the Cech cohomology groups $H^i(\cU^{\otimes l}\otimes P)$ vanish for $i>0$, 
where $P$ is the Procesi bundle \cite{H3}. Since the trivial bundle
is a summand of the Procesi bundle, and every $E=f(\cU)$ appears
as a summand of $\cU^{\otimes l}$, it follows that the desired
Euler characteristic is the character of the honest representation $H^0(E)$.

It is not clear how this formula relates to the special case of Haiman's
formulas, corresponding to the trivial component of the Procesi bundle.
Haiman's answers are expressed
in terms of the Macdonald polynomials,
via the isomorphism of Bridgeland, King, and Reid \cite{BKR}.
Both sets of formulas are expressed in terms of symmetric functions,
but in our formula, the rank of the Hilbert scheme corresponds to the 
number of variables, whereas in Haiman's formulas, the number of variables is
infinite, and $n$ corresponds to the degree.
The combinatorics of Macdonald polynomials are of course more difficult,
but we can check the agreement
when the bundle is trivial, explained in corollary \ref{thecor}.

It is interesting to note that the Euler characteristic is symmetric
in $z_1,z_2$, which is not obvious from theorem A.
This is also the case in the well-known $q,t$-Catalan number
formulas studied by Garsia, Haglund, and Haiman \cite{GHl,GH},
which the author first learned about from Gorsky, Mazin, and Shende.
In fact, Hall-Littlewood polynomials have been used to study
this topic in a paper by Garsia, Xin, and Zabrocki \cite{GXZ}.
It would also be of interest to relate them to
the results and conjectures of Gorsky, Oblomkov, Rasmussen and Shende
\cite{GORS,OS}, and any connections with the Hall-Littlewood formulas
of Mironov, Morozov, Shakirov, and Sleptsov \cite{MMSS}.

Our proof is based on a contour integral formula for the Euler
characteristic \eqref{contour} coming from the quiver description
on the Hilbert scheme, which can be found in 
Negut \cite{Ne}, and is
a $K$-theoretic version of a cohomological formula
by Nekrasov \cite{N}. 
We expand this formula in the $z_2$ variable, 
and calculate the coefficients in terms of the Hall-Littlewood inner product
in $n$ variables, with parameter $z_1$. An essential role is played by a vertex operator due to Jing \cite{J},
which extends Bernstein's vertex operator \cite{Z} from Schur to Hall-Littlewood polynomials.

\emph{Acknowledgments.} The author would like to thank 
the Simons foundation for its support,
as well as Eugene Gorsky, Vivek Shende, Mikhail Mazin, Alexei Oblomkov,
and Andrei Okounkov for many valuable discussions on this topic.

\section{Contour integrals}

The Atiyah-Bott-Lefschetz localization formula gives an explicit
formula for the Euler characteristic defined in the introduction,
\begin{equation}
\label{loc}
\chi_n(f(\cU)) = \sum_{|\mu|=n} \cU_\mu \ch \Om(T^*_\mu) \in \C(z_1,z_2).
\end{equation}
Here the fixed points of $\Hilb_n \C^2$ are indexed by partitions $\mu$ of
$n$, $\cU_\mu, T^*_\mu$ denote the fibers of $\cU$, and the cotangent bundle
respectively, and 
\[\Om(V) = \left( \ch \sum_i (-1)^i \Lambda^i V\right)^{-1} \in \C(z_1,z_2),\]
for a torus representation $V$. More generally,
$\Om$ may be extended to the whole representation ring $\Z(T)$ by
\begin{equation}
\label{Om}
\Om(A+B) = \Om(A)\Om(B),\quad \Om(x) = 
(1-x)^{-1},
\end{equation}
for any monomial $x$. See \cite{H2,Nak2} for combinatorial formulas for
the summands.

Strictly speaking, the localization formula does not apply in this situation
because the Hilbert scheme is not compact. However,
by a result of Nakajima \cite{Nak4},
the weight spaces of the Cech cohomology group are finite-dimensional,
and the Euler characteristic lives in $\Z((z_1,z_2))$, which
represents the expansion of \eqref{loc} about the origin.
If $\rho$ is a polynomial representation, as it is
in this paper, the Euler characteristic
turns out to be holomorphic at the origin. As usual, we cannot
extract the signed dimensions of the weight spaces 
from the localization formula without simplifying
the expression. In fact most of the terms have a singularity along a
one-dimensional curve through the origin in the $z_1,z_2$ plane,
and their expansions change depending on which $z_a$ we expand 
about first.

This issue can be resolved using the following contour integral formula,
\[\chi_n(f(\cU)) = \frac{1}{n!}\Om(1-M)^n 
\oint_{|x_1|=r} \frac{dx_1}{x_1} \cdots \oint_{|x_n|=r} \frac{dx_n}{x_n}\]

\begin{equation}
\label{contour} 
f(X)\Om(\overline{X}) \Om(z_1z_2 X)
\Om(-M \Delta),
\end{equation}
where
\[M = (1-z_1)(1-z_2),\quad X = x_1+\cdots+x_n,\quad\] 
\[\Delta = \sum_{i \neq j} x_i x_j^{-1} = X \overline{X} - n, \quad
f(X) = f(x_1,...,x_n),\quad \overline{x_i} = x_i^{-1}.\]
We refer to \cite{Ne} for an explanation of this formula,
or \cite{N} for the original cohomological version.
These formulas come from the description of the 
Hilbert scheme as a quiver variety, and apply to the more
general moduli space of higher rank sheaves on $\Pt$, see \cite{Nak2}.
They are shown to agree with \eqref{loc} directly by applying
the Cauchy residue formula, one variable at a time.
See also \cite{C}, which produces similar formulas, by considering
the Hilbert scheme as a subvariety of an infinite-dimensional Grassmannian.

Under formula \eqref{contour}, we find that $\chi_n(f(\cU))$
is manifestly holomorphic
at the origin, simply because the expansion of the integrand 
in $z_a$ is valid in the interior of the contour. 
Furthermore, we may
count the signed dimension of the weight spaces by applying the contour integral
to each coefficient. Each such integral may be expressed in terms of the 
standard Hall inner product on Symmetric functions in $n$ variables, establishing
that it is an integer.

\section{Hall-Littlewood polynomials}

Let us recall briefly some notation about Hall-Littlewood polynomials
and the plethystic notation,
which we standardize with chapter 3 of Macdonald's book \cite{Mac}, and
Haiman \cite{H1}.

Let $\Lambda$ denote the ring of symmetric functions, and consider the Hall-Littlewood inner product in finitely many variables,
\begin{equation}
\label{hln}
(f,g)_{z,n} = \frac{1}{n!}[X]_1 f(X)g(\overline{X}) 
\Om(-(1-z)\Delta_n),
\end{equation}
where
\[X = x_1+\cdots +x_n,\quad\Delta_n = \sum_{i \neq j} x_i x_j^{-1},\]
as in the introduction. The constant term $[X]_1$
may be defined either as a contour integral for any fixed value of $z$, 
or by expanding the integrand in $z$, and simply extracting the constant term of each coefficient, which is a Laurent polynomial in $x_i$.
We also have its limit as the number of variables tends to infinity,
normalized so that the norm of $1 \in \Lambda$ is one, defined by
\begin{equation}
\label{hl}
(p_\mu,p_\nu)_z = \delta_{\mu\nu} \zee(\mu) \prod_i (1-z^{\mu_i})^{-1},
\end{equation}
where $p_\mu$ are the symmetric power sums
\[p_\mu = \prod_k p_{\mu_k},\quad p_k = x_1^k+x_2^k+\cdots.\]

The Hall-Littlewood polynomials $P_\mu(X;z)$ for partitions of length
$\ell(\mu) \leq n$ constitute an orthogonal basis for \eqref{hln},
and satisfy
\begin{equation}
(P_\mu,P_\nu)_{z,n} = \delta_{\mu\nu} (1-z)^n b_{\mu,n}(z)^{-1},
\end{equation}
where
\[b_{\mu,n}(z) = \prod_{i \geq 0} [m_i(\mu)]_z,\quad   
[k]_z = \prod_{1 \leq j \leq k}(1-z^j),\]
and $m_i(\mu)$ is the number of times that $i$ appears in $\mu$, with the
multiplicity of zero defined as $n-\ell(\mu)$. In the limit as $n$ tends to
infinity, we get
\[(P_\mu,P_\nu)_z = \delta_{\mu\nu} b_\mu(z)^{-1},\]
\[b_\mu(z) = \lim_{n \rightarrow \infty} b_{\mu,n}(z)b_{\emptyset,n}(z)^{-1} =
\prod_{i \geq 1} [m_i(\mu)]_z.\]

Given a rational function $A$ in some set of variables, let $A_k$ denote the evaluation
at $z=z^k$, for each indeterminant $z$ that appears in $A$. 
We will make heavy use of the following multiplication operator
\[\Gm(A) : \Lambda \rightarrow \Lambda,\quad \Gm(A) g = \exp \left(\sum_{k \geq 1}\frac{1}{k} f_k p_k\right) g,\]
which is technically only defined as a power series in 
all variables present in $A$ with values in $\End(\Lambda)$.
This is a harmless issue for our purposes, but
see Frenkel and Ben-Zvi \cite{FB} for a full exposition.
For instance, we have
\[\Gm(x) \cdot 1 = \sum_{k \geq 0} x^k h_k.\]
Its dual under the standard Hall inner product is a ring homomorphism, defined on generators by
\[\Gp(A) p_k = p_k+A_k.\]

The following relations are easily verified,
\[(\Gm(A)f,g)_z = (f,\Gp(A(1-z)^{-1}))_z,\]
\[\Gamma_{\pm}(A+B) = \Gamma_{\pm}(A) \Gamma_{\pm}(B),\quad
\Gamma_{\pm}^m(A) = \Gamma_{\pm}(mA),\]
\[\Gamma_{\pm}(A) x^d = x^d\Gamma_{\pm}(x^{\pm 1} A),\quad
x^d\cdot p_\mu = x^{|\mu|} p_\mu,\]
\begin{equation}
\label{CR}
\Gp(A)\Gm(B) = \Gm(B) \Gp(A) \Om(AB),
\end{equation}
where
\[\Om(A) = \exp\left( \sum_{k \geq 1} \frac{1}{k} A_k\right),\]
and the convergence of $\Om(AB)$ puts restrictions on $A,B$.
Notice that this definition of $\Om(A)$ is consistent with \eqref{Om}.

Next, we recall Jing's vertex operator \cite{J}, 
which generates the Hall-Littlewood polynomials by successive 
applications to $1\in \Lambda$.
In this notation, it is defined by
\begin{equation}
J_k = [x^k] \Gm(x(1-z))\Gp^{-1}(x^{-1}),
\label{jing}
\end{equation}
and has the property that
\[Q_\mu = J_{\mu_1} \cdots J_{\mu_n} \cdot 1,\]
where
\[Q_\mu(X;z) = b_\mu(z) P_\mu(X;z),\]
is the dual basis to $P_\mu$ under \eqref{hl},
as in MacDonald's book. Upon setting $z=0$,
it becomes the vertex operator defined
by Bernstein \cite{Z}, which acts on the Schur polynomials.

Given any operator $\varphi : \Lambda \rightarrow \Lambda$, 
let us label its matrix elements in the Hall-Littlewood basis by
\[ \varphi \cdot P_{\nu}(X;z) = \sum_\mu \varphi_{\mu\nu}(z) P_{\mu}(X;z).\]
If $f \in \Lambda$ is a polynomial, then we define $f_{\mu\nu}(z)$ 
to be the matrix elements of multiplication by $f$.
We will also set
\[ \psi_{\mu\nu}(z) = \Gm(1)_{\mu\nu}(z),\]
which is the same thing as multiplication by the complete 
symmetric polynomial $h_k$, for $k=|\nu|-|\mu|$.
The Pieri rules for Hall-Littlewood polynomials provide a combinatorial 
description of these coefficients, which we will not need.

\section{Proof of the theorem}

We may now state and prove our main result:

\begin{thm}
\label{thethm}
If $f \in \Lambda$ is a symmetric function, and $\cU$ 
is the tautological $n$-dimensional
bundle on $\Hilb_n \C^2$ with the torus action \eqref{torus}, then we have
\begin{equation}
\label{maineq}
\chi_n(f(\cU)) =  \sum_{\mu,\nu} z_2^{|\mu|}z_1^{|\mu|+k_{\mu\nu}}
b_{\nu,n}(z_1)^{-1} f_{\nu\mu}(z_1),
\end{equation}
where
\[k_{\mu\nu} = \sum_i \left(\begin{array}{c} \mu'_i \\ 2 \end{array}\right)+
\left(\begin{array}{c} \nu'_i \\ 2 \end{array}\right)-\mu'_i \nu_i',\]
and $\mu'$ is the conjugate partition to $\mu$.
\end{thm}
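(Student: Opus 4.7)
The plan is to expand the contour integral \eqref{contour} as a power series in $z_2$ and identify its coefficients in terms of the $n$-variable Hall-Littlewood inner product \eqref{hln} and Jing's vertex operator.

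First, factor the weight $\Om(-M\Delta)$ using $M=(1-z_1)(1-z_2)$ as
\[
\Om(-M\Delta)\;=\;\Om(-(1-z_1)\Delta)\cdot\Om\bigl(z_2(1-z_1)\Delta\bigr),
\]
so that the first factor is exactly the measure of $(\cdot,\cdot)_{z_1,n}$. Writing $\Delta=X\overline X-n$ and invoking the twisted Hall-Littlewood Cauchy identity $\Om((1-z_1)XY)=\sum_\mu P_\mu(X;z_1)Q_\mu(Y;z_1)$ with $Y=z_2\overline X$, the degree-$|\mu|$ homogeneity of $Q_\mu$ yields
\[
\Om\bigl(z_2(1-z_1)X\overline X\bigr)\;=\;\sum_\mu z_2^{|\mu|}P_\mu(X;z_1)Q_\mu(\overline X;z_1).
\]
A direct check using $1-M=z_1+z_2(1-z_1)$ shows that the residual scalar $\Om(1-M)^n\cdot\Om(-nz_2(1-z_1))$ collapses to $(1-z_1)^{-n}$. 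Substituting both identities into \eqref{contour} rewrites the integral as
\[
\chi_n(f(\cU))\;=\;(1-z_1)^{-n}\sum_\mu z_2^{|\mu|}\bigl(f\cdot\Om(z_1z_2X)\cdot P_\mu,\,\Om(X)\cdot Q_\mu\bigr)_{z_1,n},
\]
which already displays the $z_2^{|\mu|}$ of \eqref{maineq}.

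Next I would reduce the inner product using the vertex-operator framework. Write $\Om(z_1z_2X)=\Gm(z_1z_2)\cdot 1$ and $\Om(X)=\Gm(1)\cdot 1$, then use the adjoint relation $(\Gm(A)f,g)_{z_1}=(f,\Gp(A(1-z_1)^{-1})g)_{z_1}$ to transfer the $\Gm$ factors across the pairing, and apply the commutator \eqref{CR} to normal-order the resulting $\Gp$-operators past the Jing operators generating $Q_\mu=J_{\mu_1}\cdots J_{\mu_\ell}\cdot 1$. Expanding $f\cdot P_\mu=\sum_\nu f_{\nu\mu}(z_1)P_\nu$ in the Hall-Littlewood basis and applying the orthogonality $(P_\mu,P_\nu)_{z_1,n}=\delta_{\mu\nu}(1-z_1)^n b_{\mu,n}(z_1)^{-1}$ then isolates the desired factor $b_{\nu,n}(z_1)^{-1}f_{\nu\mu}(z_1)$.

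The hard part will be pinning down the exponent $z_1^{|\mu|+k_{\mu\nu}}$, because $k_{\mu\nu}=n(\mu)+n(\nu)-\sum_i\mu_i'\nu_i'$ is bilinear in the transposed partitions. I expect this combinatorial integer to arise from the accumulated $\Om(AB)$ scalars produced by \eqref{CR} when normal-ordering the two families of Jing operators—one coming from the Cauchy factor generating $Q_\mu$, the other implicit in the transported $\Gm(1)$ acting on $Q_\mu$—combined with the dilation identity $x^d\Gm(A)=\Gm(xA)x^d$ and the explicit form $J_k=[x^k]\Gm(x(1-z_1))\Gp^{-1}(x^{-1})$. The $n(\mu)+n(\nu)$ contribution should come from the self-pairings of each Jing chain, the $-\sum_i\mu_i'\nu_i'$ cross-term from their mutual commutators, and the extra $z_1^{|\mu|}$ from the degree shift absorbed when $Q_\mu(\overline X;z_1)$ is carried back across $X\leftrightarrow\overline X$.
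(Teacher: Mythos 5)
Your first half is correct and follows the paper's own route exactly: the factorization $\Om(-M\Delta)=\Om(-(1-z_1)\Delta)\Om(z_2(1-z_1)\Delta)$, the Hall--Littlewood Cauchy kernel applied to $\Om(z_2(1-z_1)X\overline X)$, and the cancellation $\Om(1-M)^n\Om(-nz_2(1-z_1))\cdot(\text{Cauchy prefactor})=(1-z_1)^{-n}$ all check out, and your intermediate expression
\[
(1-z_1)^{-n}\sum_\lambda z_2^{|\lambda|}\,b_\lambda(z_1)\bigl(f\cdot\Gm(z_1z_2)P_\lambda,\ \Gm(1)P_\lambda\bigr)_{z_1,n}
\]
is precisely where the paper lands as well (your $\mu$ is the paper's Cauchy index $\lambda$; note it is distinct from the $\mu$ appearing in \eqref{maineq}, which indexes the expansion of $\Gm(z_1z_2)P_\lambda$).

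The gap is in the second half. Expanding $\Gm(z_1z_2)P_\lambda$, then $f$, in the Hall--Littlewood basis and using orthogonality reduces the theorem to the identity
\[
\sum_{\lambda} z^{-|\lambda|}\, b_\lambda(z)\,\psi_{\mu\lambda}(z)\,\psi_{\nu\lambda}(z) \;=\; z^{k_{\mu\nu}},
\]
where $\psi_{\mu\lambda}$ are the Pieri coefficients $\Gm(1)_{\mu\lambda}$. This is the entire content of the exponent $z_1^{|\mu|+k_{\mu\nu}}$, and it is exactly what the paper isolates as Lemma \ref{thelemma}. You correctly flag this as ``the hard part,'' but what you offer is a heuristic (``I expect this to arise from the accumulated $\Om(AB)$ scalars... should come from the self-pairings... from their mutual commutators'') rather than an argument. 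The heuristic is not obviously sound as stated: the scalars produced by \eqref{CR} when normal-ordering two chains of Jing operators are rational functions of the auxiliary variables $x_i$, not monomials in $z$, and after extracting the coefficients $[x_1^{\mu_1}\cdots]$ there is no a priori reason the sum over $\lambda$ collapses to a single power $z^{k_{\mu\nu}}$. The paper's actual proof works differently: it characterizes $k_{\mu\nu}$ uniquely by the recursion $k_{[a,\mu]\nu}-k_{\mu\nu}=|\mu|-|\nu|$ for $a\geq\mu_1,\nu_1$, recasts the left-hand side as a modified inner product $(Q_\mu,Q_\nu)'$, and verifies the recursion by showing that $(J_af,Q_\nu)'=[x^a](1-x)^{-1}F(x)$ for a Laurent polynomial $F$ with $\deg_x F\leq\nu_1$, so that the coefficient stabilizes to $F(1)$ once $a\geq\nu_1$. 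That degree-bound-plus-evaluation-at-$x=1$ step is the key idea your proposal is missing; without it (or some substitute) the exponent $k_{\mu\nu}$ is not established.
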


If $f$ is a linear combination of the Schur polynomials 
with nonnegative integer coefficients,
then $f_{\mu\nu}(z)$ is polynomial in $z$ with nonnegative integer coefficients. 
The coefficients of the power series expansion of \eqref{maineq} 
are therefore nonnegative integers, representing the result of Haiman
explained the introduction that the higher Cech cohomology groups vanish.

An immediate corollary is the well-known formula for the 
the space of sections of $\cO$.

\begin{cor}
\label{thecor}
The space of sections of the trivial bundle is given by
\begin{equation}
\label{Z}
\sum_{n\geq 0} q^n \chi_{n} (\cO) = \Om(qM^{-1}) = 
\prod_{i,j \geq 0} (1-z_1^iz_2^j q)^{-1}.
\end{equation}
\end{cor}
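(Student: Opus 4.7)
The plan is to specialize Theorem \ref{thethm} to $f=1$ and reorganize the resulting sum as an infinite product via two applications of Euler's partition identity.

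First I would set $f=1$ in \eqref{maineq}. Since multiplication by the constant function $1$ is the identity operator on $\Lambda$, its Hall--Littlewood matrix elements are $f_{\nu\mu}(z_1)=\delta_{\nu\mu}$, so the double sum collapses to the diagonal. Next I would compute the diagonal exponent:
\[ k_{\mu\mu} = \sum_i \left[ 2\binom{\mu'_i}{2} - (\mu'_i)^2 \right] = -\sum_i \mu'_i = -|\mu|,\]
so $z_1^{|\mu|+k_{\mu\mu}}=1$ and
\[\chi_n(\cO) = \sum_{\ell(\mu)\leq n} z_2^{|\mu|} b_{\mu,n}(z_1)^{-1}.\]

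The key structural observation is that $b_{\mu,n}(z) = [n-\ell(\mu)]_z \, b_\mu(z)$, separating the $n$-dependence. Substituting this, summing over $n$, and setting $m=n-\ell(\mu)$ factors the generating function as
\[ \sum_{n\geq 0} q^n \chi_n(\cO) = \left(\sum_\mu \frac{z_2^{|\mu|} q^{\ell(\mu)}}{b_\mu(z_1)}\right)\left(\sum_{m\geq 0}\frac{q^m}{[m]_{z_1}}\right).\]
The second factor equals $\prod_{k\geq 0}(1-q z_1^k)^{-1}$ by Euler's classical identity. For the first factor I would parameterize partitions by their multiplicities $m_i=m_i(\mu)$, so that $|\mu|=\sum_i i\, m_i$, $\ell(\mu)=\sum_i m_i$, and $b_\mu(z_1)=\prod_{i\geq 1}[m_i]_{z_1}$. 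This turns the sum into the product
\[\prod_{i\geq 1}\sum_{m\geq 0}\frac{(z_2^i q)^m}{[m]_{z_1}} = \prod_{i\geq 1}\prod_{k\geq 0}\frac{1}{1-q z_1^k z_2^i}\]
by a second application of Euler. Combining the two factors yields $\prod_{i,j\geq 0}(1-q z_1^i z_2^j)^{-1}$, and the identification of this with $\Om(qM^{-1})$ follows at once by expanding $M^{-1}=\sum_{i,j\geq 0} z_1^i z_2^j$ and invoking the multiplicativity \eqref{Om}.

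There is no serious obstacle: the only substantive input beyond the theorem is Euler's identity, applied twice. The one point requiring care is the bookkeeping for $b_{\mu,n}$ versus $b_\mu$, in particular accounting for the multiplicity $m_0(\mu)=n-\ell(\mu)$ that allows the $n$-dependence to be factored off cleanly before passing to the limit $n\to\infty$ implicit in summing over all $n\geq 0$.
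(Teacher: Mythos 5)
Your proposal is correct: the opening steps (setting $f=1$, computing $k_{\mu\mu}=-|\mu|$, and arriving at $\chi_n(\cO)=\sum_{\ell(\mu)\leq n} z_2^{|\mu|}b_{\mu,n}(z_1)^{-1}$) coincide exactly with the paper's, but you finish the computation differently. The paper stays at fixed $n$: it regroups the sum according to the auxiliary partition $\tilde\mu$ of $n$ formed by the multiplicities $m_i(\mu)$ (including $m_0=n-\ell(\mu)$), identifies $\sum_{\tilde\mu=\lambda}z_2^{|\mu|}=m_\lambda(1,z_2,z_2^2,\dots)$ and $[\lambda_k]_{z_1}^{-1}=h_{\lambda_k}(1,z_1,z_1^2,\dots)$, and then cites the Cauchy identity $\sum_\lambda m_\lambda(X)h_\lambda(Y)=\prod(1-x_iy_j)^{-1}$ from Macdonald. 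You instead sum over $n$ first, use the factorization $b_{\mu,n}(z)=[n-\ell(\mu)]_z\,b_\mu(z)$ to split off the $m_0$ contribution, and apply Euler's $q$-exponential identity $\sum_m t^m/[m]_z=\prod_{k\geq 0}(1-tz^k)^{-1}$ twice --- once for the free variable $m=n-\ell(\mu)$ and once for each multiplicity $m_i$, $i\geq 1$. The two arguments are of course equivalent at bottom (the Cauchy kernel at these principal specializations factors into exactly those Euler products), but yours is somewhat more self-contained: it avoids the small combinatorial observation that $\sum_{\tilde\mu=\lambda}z_2^{|\mu|}$ is a specialized monomial symmetric function, at the cost of working with the generating function in $q$ rather than obtaining a closed form for each $\chi_n(\cO)$ separately. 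Your final identification $\Om(qM^{-1})=\prod_{i,j\geq 0}(1-z_1^iz_2^jq)^{-1}$ via $M^{-1}=\sum_{i,j\geq 0}z_1^iz_2^j$ and the multiplicativity \eqref{Om} is exactly right and is left implicit in the paper.
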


\begin{proof}
The partition function corresponds to $f=1$, whence $f_{\mu\nu} = \delta_{\mu\nu}$.
We may easily check that $k_{\mu\mu} = -|\mu|$, so that formula \eqref{maineq} becomes
\[\chi_{n}(\cO)= \sum_{\mu} z_2^{|\mu|} b_{\mu,n}(z_1)^{-1}.\]
Fixing $n$, we may associate to $\mu$ another partition 
$\tilde{\mu}$, whose terms are the multiset of positive
integers $m_i(\mu)$,
including the multiplicity of zero, as defined above.
We can rewrite the above expression as
\[\sum_{|\lambda|=n} \sum_{\tilde{\mu} =\lambda} z_2^{|\mu|} \prod_{k} 
[\lambda_k]_{z_1}^{-1}.\]
One may easily check that
\[\sum_{\tilde{\mu} = \lambda} z_2^{|\mu|} = m_{\lambda}(1,z_2,z_2^2,...),\quad [\lambda_k]_{z_1}^{-1} = h_{\lambda_k}(1,z_1,z_1^2,...),\]
where $m_\mu,h_\mu$ are the monomial and complete symmetric polynomials. Then
\[ \chi_n(\cO) = \sum_{|\lambda|=n} m_\lambda(1,z_2,z_2^2,...) h_{\lambda} (1,z_1,z_1^2,...).\]
Converting this to \eqref{Z} is precisely chapter I, formula (4.2) of \cite{Mac}.
\end{proof}

Before proving the theorem, we need a technical lemma:

\begin{lemma}
\label{thelemma}
We have
\begin{equation}
\label{lemmaeq}
\sum_{\lambda} z^{-|\lambda|} b_\lambda(z) 
\psi_{\mu\lambda}(z)\psi_{\nu\lambda}(z) = z^{k_{\mu\nu}}.
\end{equation}
\end{lemma}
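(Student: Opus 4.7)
The plan is to identify the sum on the left of \eqref{lemmaeq} as the coefficient of $P_\mu(X;z)P_\nu(Y;z)$ in a two-variable Cauchy-type generating function, and then to evaluate that coefficient via Jing's vertex operator calculus.

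First, starting from the Hall--Littlewood Cauchy identity $\sum_\lambda b_\lambda(z)P_\lambda(X;z)P_\lambda(Y;z)=\Om[(1-z)XY]$, the plethystic rescaling $X\mapsto X/z$ yields
\[
\sum_\lambda z^{-|\lambda|}b_\lambda(z)P_\lambda(X;z)P_\lambda(Y;z) \;=\; \Om[(z^{-1}-1)XY].
\]
Multiplying by $\Om(X)\Om(Y)$ and using $\Gamma_-(1)P_\lambda=\Om(X)P_\lambda=\sum_\mu\psi_{\mu\lambda}(z)P_\mu$ (and analogously in $Y$), the left side of \eqref{lemmaeq} appears as the coefficient of $P_\mu(X;z)P_\nu(Y;z)$ in $\Om[X+Y+(z^{-1}-1)XY]=\Om(X)\Om(Y)\Om[(z^{-1}-1)XY]$. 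Hence the lemma is equivalent to the two-variable identity
\[
\Om[X+Y+(z^{-1}-1)XY] \;=\; \sum_{\mu,\nu}z^{k_{\mu\nu}}\,P_\mu(X;z)\,P_\nu(Y;z). \qquad(\star)
\]

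I would prove $(\star)$ by induction on $|\nu|$. The base case $\nu=\emptyset$ reduces to $\Om(X)=\sum_\mu z^{n(\mu)}P_\mu(X;z)$, equivalently $P_\mu[1/(1-z);z]=z^{n(\mu)}/b_\mu(z)$; this follows from the Cauchy identity evaluated at the principal specialization $(1,z,z^2,\ldots)$. For the inductive step I apply Jing's operator $J(y_0)=\Gamma_-(y_0(1-z))\Gamma_+^{-1}(y_0^{-1})$ in the $Y$-variables to both sides of $(\star)$ and extract the coefficient of $y_0^r$. Using the plethystic translation $J(y_0)^Y f(Y)=\Om[y_0(1-z)Y]\,f[Y-1/y_0]$, the LHS transforms as
\[
J(y_0)^Y\,\Om[X+Y+(z^{-1}-1)XY] \;=\; \Om[X+Y+(z^{-1}-1)XY]\,\Om[y_0(1-z)Y]\,\bigl(1-\tfrac{1}{y_0}\bigr)\prod_i\tfrac{1-z^{-1}x_i/y_0}{1-x_i/y_0}.
\]
On the right side of $(\star)$, $J_r Q_\nu=Q_{(r,\nu)}$ for $r\ge\nu_1$, and the incremental exponent $k_{\mu,(r,\nu)}-k_{\mu,\nu}=\sum_{i\le r}(\nu'_i-\mu'_i)$ must emerge from this Jing commutation, paired with the inductive hypothesis.

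The main obstacle is matching, after extracting the coefficient of $y_0^r$, the scalar factor $\Om[(1-z^{-1})X/y_0](1-1/y_0)$ on the $X$-side against the incremental power of $z$ dictated by $k_{\mu,(r,\nu)}-k_{\mu,\nu}$. This matching should follow from the identification $\psi_{\mu\lambda}(z)=Q_{\mu/\lambda}[1/(1-z);z]$---itself a consequence of the skew Hall--Littlewood Cauchy identity, since evaluating the Cauchy kernel at $Y=1/(1-z)$ produces $\Om(X)=\Gamma_-(1)\cdot 1$ on the right---combined with the combinatorics of horizontal strips in Pieri's rule.
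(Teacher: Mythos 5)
Your reformulation of \eqref{lemmaeq} as the two--variable identity $(\star)$ is correct, and it is essentially a repackaging of what the paper does: the paper rewrites the left side as a modified pairing $(Q_\mu,Q_\nu)'=\left(\Gp(A^{-1})Q_\mu,\,z^{-d}\Gp(A^{-1})Q_\nu\right)_z$ with $A=1-z$, and since $\Gp(A^{-1})$ is the adjoint of multiplication by $\Om[X]$ under $(\cdot,\cdot)_z$ while $z^{-d}$ accounts for your rescaling $X\mapsto X/z$, this pairing is exactly the coefficient of $P_\mu(X;z)P_\nu(Y;z)$ in your kernel $\Om[X+Y+(z^{-1}-1)XY]$. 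Your base case is also right ($k_{\mu\emptyset}=n(\mu)$, and $\Om[X]=\sum_\mu z^{n(\mu)}P_\mu(X;z)$ follows from the principal specialization), and the inductive engine you propose --- Jing's operator together with the recursion for $k_{\mu,(r,\nu)}-k_{\mu\nu}$ --- is the same one the paper uses.

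The difficulty is that the step you label the ``main obstacle'' is the entire content of the lemma, and you have not carried it out; ``this matching should follow from'' is not an argument. Two things are genuinely missing. First, the factor $(1-1/y_0)\prod_i\frac{1-z^{-1}x_i/y_0}{1-x_i/y_0}$ is not a scalar: it is a multiplier in the $X$-variables that does not act diagonally on the basis $P_\mu(X;z)$, and the increment $k_{\mu,(r,\nu)}-k_{\mu\nu}=\sum_{i\le r}(\nu'_i-\mu'_i)$ depends on $\mu$ and equals $|\nu|-|\mu|$ only when $r\ge\mu_1$ in addition to $r\ge\nu_1$; so you must show that extracting $[y_0^r]$ reorganizes the whole $\mu$-sum correctly, not merely that a single power of $z$ appears. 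Second, $[y_0^r]$ of your product is an infinite sum, since the $X$-side factor contributes arbitrarily negative powers of $y_0$, and you need a stabilization statement asserting that for $r\ge\nu_1$ the extraction collapses to an evaluation at $y_0=1$. This is precisely how the paper closes the argument: after commuting the vertex operators it writes the relevant quantity as $[x^a](1-x)^{-1}F(x)$ with $F$ a Laurent polynomial, proves $\deg_x F\le\nu_1$ using that $Q_\nu$ is triangular against the monomial basis and that $(\Gm(x)f,m_\mu)=\sum_k x^k(fh_k,m_\mu)$, and then uses $[x^a](1-x)^{-1}F(x)=F(1)$ for $a\ge\deg_x F$; the evaluation at $x=1$ is what produces the shift by $z^{|\mu|-|\nu|}$. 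Until you supply the analogue of this degree bound and the resulting evaluation at $y_0=1$, your induction does not close.
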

\begin{proof}

The exponent $k_{\mu\nu}$ satisfies
\[k_{\emptyset \emptyset} = 0,\quad k_{\mu\nu} = k_{\nu\mu},\quad
k_{[a,\mu] \nu} - k_{\mu\nu} = |\mu|-|\nu|,\]
\begin{equation}
\label{kprop}
a \geq \mu_1,\nu_1, \quad [a,\mu] = [a,\mu_1,...,\mu_n].
\end{equation}
It is uniquely determined by these properties by successively adding terms to $\mu,\nu$, in increasing order.

Now, we may rewrite the left hand side of \eqref{lemmaeq} as
\[\sum_{\lambda} z^{-|\lambda|} (\Gm(1) P_\lambda, Q_\mu)_{z} 
(\Gm(1) P_\lambda,Q_\nu)_{z} (P_\lambda,P_\lambda)_z^{-1} = (Q_\mu,Q_\nu)',\]
where
\[(f,g)' = \left(\Gp(A^{-1}) f, z^{-d} \Gp(A^{-1}) g\right)_z, \quad A =1-z,\]
and $d$ is the operator of multiplication by the norm on $\Lambda$.
It suffices to prove that this inner product satisfies
\[ (J_a f, Q_\nu)' = (z^d f,z^{-d} Q_\nu)' \]
whenever $a\geq \nu_1$, which would establish the last property in \eqref{kprop}.

Inserting the definition \eqref{jing}, we get the coefficient of $x^a$ in
\[\left(\Gp(A^{-1}) \Gm(xA)\Gp^{-1}(x^{-1})f, z^{-d} \Gp(A^{-1}) Q_\nu\right)_z =\]
\[(1-x)^{-1}\left(\Gp(A^{-1}-x^{-1})f, \Gp(x)z^{-d} \Gp(A^{-1}) Q_\nu\right)_z =\]
\[(1-x)^{-1}\left(\Gp(A^{-1}-x^{-1})f, z^{-d} \Gp(A^{-1}+xz^{-1}) Q_\nu\right)_z.\]
%
%
%
%
%
%
%
using the vertex operator relations \eqref{CR}.

The final expression may be written as $(1-x)^{-1} F(x)$, where $F(x)$ is a Laurent polynomial in $x$ with 
coefficients in $\C(z)$.
%
%
%
Since $Q_\nu$ is lower-triangular with respect to the monomial basis $m_\mu$, we may bound the degree of $F(x)$ in $x$ by
\[\deg_x F(x) \leq \deg_x \Gp(x) Q_{\nu} \leq 
\max_{\mu \leq \nu} \deg_x \Gp(x) m_{\mu},\]
where $\mu \leq \nu$ refers to the dominance ordering.
Since 
\[(f,\Gp(x) m_\mu) = (\Gm(x) f, m_\mu) = \sum_{i \geq 0} x^k(fh_k,m_\mu),\]
and $h_\mu,m_\mu$ are dual bases, we find that the degree of $F(x)$ is at most $\nu_1$.

Then for $a \geq \nu_1 \geq \deg_x F(x)$, we have
\[[x^a] (1-x)^{-1} F(x) = F(1) = \]
\[\left(\Gp\left(zA^{-1}\right)f, z^{-d} \Gp\left(z^{-1}A^{-1}\right) Q_\nu\right) =(z^{d} f, z^{-d} Q_\nu)' .\]

\end{proof}

We can now prove the main result.

\begin{proof}

We may rewrite the contour integral formula from the introduction as
\begin{equation}
\label{contour2} 
\chi_n(f(\cU)) = \frac{1}{n!}\Om(1-M)^n 
[X]_1
f(X)\Om(\overline{X}) \Om(z_1z_2 X)
\Om(-M \Delta),
\end{equation}
where the constant term is taken from each term in the expansion
of the integrand in $z_1,z_2$. 
Let us expand the rightmost term in the $z_2$ variable,
\[\Om(-M\Delta) = \Om(-(1-z_1)\Delta)\Om(z_2(1-z_1)\Delta) =\]
\[\Om(-(1-z_1)\Delta)\Om(z_2(1-z_1))^{-n}
\sum_{\lambda} z_2^{|\lambda|} b_{\lambda}(z_1)P_{\lambda}(X;z_1) P_{\lambda}(\overline{X};z_1),\] 
by the expansion
\[\Om(x(1-z)XY) = \sum_{\lambda} x^{|\lambda|} b_\lambda (z)
P_{\lambda}(X;z) P_\lambda(Y;z),\]
which can be found in \cite{Mac}, chapter III, equation (4.4). Inserting this into equation \eqref{contour2}, and using \eqref{hln},
we get

%
%
%
\[(1-z_1)^{-n}\sum_{\lambda} z_2^{|\lambda|} b_\lambda(z_1)
(\Gm(z_1z_2) P_{\lambda} f, \Gm(1)P_{\lambda})_{z_1,n} = \]
\[\sum_{\lambda,\mu,\nu}  
z_2^{|\mu|} z_1^{|\mu|-|\lambda|}
b_\lambda(z_1) b_{\nu,n}(z_1)^{-1}
f_{\nu\mu}(z_1)\psi_{\mu\lambda}(z_1) \psi_{\nu\lambda}(z_1).\]
%
%
The result now follows by summing over $\lambda$,
and applying lemma \ref{thelemma}.
\end{proof}

\end{document}